\long\def\symbolfootnote[#1]#2{\begingroup
\def\thefootnote{\fnsymbol{footnote}}\footnote[#1]{#2}\endgroup}
\newtheorem{theorem}{Theorem}[section]
\newtheorem{lemma}[theorem]{Lemma}
\newtheorem{thm}[theorem]{Theorem}
\newtheorem{cor}[theorem]{Corollary}
\theoremstyle{definition}
\newtheorem{rem}[theorem]{Remark}
\newtheorem{defin}[theorem]{Definition}
\newcommand{\N}{\mathbb{N}}
\newcommand{\Z}{\mathbb{Z}}
\newcommand{\Mod}{\mathrm{Mod}(S)}
\begin{document}

\title[Unicorn paths and hyperfiniteness]{Unicorn paths and hyperfiniteness for the mapping class group}

\author[P.~Przytycki]{Piotr Przytycki$^{\dag}$}
\address{Dep.\ of Math.\ \& Stat., McGill University\\
Burnside Hall, 805 Sherbrooke St.\ W\\
Montreal, Quebec, Canada H3A 0B9}
\email{piotr.przytycki@mcgill.ca}
\thanks{$\dag \ddag$ Partially supported by NSERC and National Science Centre, Poland UMO-2018/30/M/ST1/00668.}

\author[M.~Sabok]{Marcin Sabok$^{\ddag}$}
\email{marcin.sabok@mcgill.ca}
\thanks{$\dag$ Partially supported by AMS}

\maketitle

\begin{abstract}
\noindent
Let $S$ be an orientable surface of finite type. Using Pho-On's infinite unicorn paths, we prove the hyperfiniteness of orbit equivalence relations induced by the actions of the mapping class group of~$S$ on the Gromov boundaries
of the arc graph and the curve graph of~$S$. In the curve graph case, this strengthens the results of Hamenst\"adt and Kida that this action is universally amenable and that the mapping class group of~$S$ is exact.
\end{abstract}

\section{Introduction}
\label{sec:intro}
An equivalence relation $E$ on a standard Borel space $X$ is
\textit{Borel} if $E$ is a Borel subset of $X\times X$. An
equivalence relation is \textit{countable}
(resp.\ \textit{finite}) if every equivalence class is countable (resp.\ finite). Given a Borel action of a
countable group on a standard Borel space $X$, the
induced orbit equivalence relation is a countable Borel
equivalence relation. A Borel equivalence relation~$E$ is \textit{hyperfinite} if
$E$ can be written as an increasing union of a sequence of finite Borel
equivalence relations.

Let $S$ be an oriented surface of genus $g\geq 0$ with $n\geq 0$ punctures, of negative Euler characteristic. We denote by~$\mathcal A(S)$ (for $n\geq 1$) and~$\mathcal {C}(S)$ its arc graph and its curve graph, which are Gromov hyperbolic (see Section~\ref{sec:paths}). The actions of the mapping class group $\Mod$ on~$\mathcal A(S)$ and~$\mathcal {C}(S)$ by automorphisms extend to actions on their Gromov boundaries by homeomorphisms. Our main result is:

\begin{thm}
\label{thm:arc} The orbit equivalence relation on $\partial \mathcal A(S)$ induced by the action of $\Mod$ is hyperfinite.
\end{thm}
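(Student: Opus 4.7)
The strategy is to use Pho-On's infinite unicorn paths to encode each boundary point by a canonical sequence of arcs in $\mathcal A(S)$, and then to write the $\Mod$-orbit equivalence relation as an increasing union of finite Borel equivalence relations obtained by comparing finite windows of these sequences.

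First I would fix a basepoint arc $a_0\in\mathcal A(S)$ and, for each $\xi\in\partial\mathcal A(S)$, select in a Borel manner an infinite unicorn path $\rho_\xi=(a_0, a_1^\xi, a_2^\xi,\ldots)$ converging to $\xi$. The existence of such rays is Pho-On's theorem, and the discreteness of the unicorn construction at each step, together with the uniform fellow-traveling of any two unicorn rays sharing an endpoint, should allow the selection to be Borel, yielding a Borel map $\Phi\colon\partial\mathcal A(S)\to\mathcal A(S)^{\N}$.

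Next, for each $n\in\N$, I would define a Borel equivalence relation $E_n$ on $\partial\mathcal A(S)$ by declaring $\xi \mathrel{E_n} \eta$ iff there is $g\in\Mod$ with $d_{\mathcal A(S)}(a_0,g^{-1}\cdot a_0)\le n$ and $g\cdot\xi=\eta$ such that the sequences $(g\cdot a_k^\xi)_k$ and $(a_k^\eta)_k$ fellow-travel within the universal unicorn constant past some finite initial segment. These relations are increasing, and because $g\cdot\rho_\xi$ is itself a unicorn ray converging to $g\cdot\xi=\eta$, the uniform fellow-traveling lemma for unicorn paths implies that $\bigcup_n E_n$ is precisely the $\Mod$-orbit equivalence relation on $\partial \mathcal A(S)$.

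The main obstacle I expect is verifying that each $E_n$ has finite classes. The $E_n$-class of $\xi$ is contained in the set of points $g\cdot\xi$ for $g$ ranging over the finite ball $\{g : d(a_0,g^{-1}\cdot a_0)\le n\}$ modulo the subgroup of mapping classes stabilizing a tail of $\rho_\xi$. The crucial input is that this stabilizer coincides with the setwise stabilizer in $\Mod$ of the ending lamination determined by $\xi$, which has controlled structure, and in particular acts trivially on $\xi$, so the $E_n$-class is finite. Arranging this precisely, that is, quantifying the fellow-traveling of $g\cdot\rho_\xi$ with $\rho_\eta$ strongly enough both to make $E_n$ Borel and to push through the class-finiteness argument, is where the combinatorial strength of Pho-On's infinite unicorn paths does the essential work. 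Once these steps are in place, $\bigcup_n E_n$ exhibits the orbit equivalence as hyperfinite. If the direct argument is resistant, a fallback is to Borel-reduce the orbit equivalence to a tail equivalence on a Borel subset of $\mathcal A(S)^{\N}$ via the labels supplied by $\Phi$, and to invoke the classical hyperfiniteness of tail equivalence.
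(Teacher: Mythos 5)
Your primary route has a genuine gap at exactly the point you flag as the main obstacle, and the difficulty is worse than you suggest. The set $\{g\in\Mod : d_{\mathcal A(S)}(a_0,g^{-1}\cdot a_0)\le n\}$ is \emph{infinite} for every $n$: the arc graph is locally infinite and the action of $\Mod$ on it is far from proper (arc stabilisers are infinite), so your $E_n$-classes are not obviously finite, and the stabiliser-of-the-tail argument you sketch does not repair this --- the issue is not the stabiliser of $\xi$ but the infinitude of the ball itself. Moreover, as defined, $E_n$ need not be transitive (composing two witnesses $g,h$ from the ball of radius $n$ need not land in that ball), and the clause ``past some finite initial segment'' introduces an unbounded existential that destroys the finiteness you need. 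Finally, ``fellow-traveling within a universal constant'' is too weak a notion in a locally infinite graph: infinitely many distinct paths can fellow-travel a given one, so no finiteness can be extracted from it.

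Your fallback --- reduce to tail equivalence on sequences of labels --- is in fact the paper's route, but the two ingredients that make it work are absent from your sketch. First, you cannot label by the arcs themselves: to detect when two boundary points are in the same $\Mod$-orbit you must label each path by a sequence of \emph{finite windows taken up to the action of $\Mod$}, and the windows must be chosen long enough (the paper uses the stabilisation of the subsurfaces filled by pairs of arcs along the path) so that matching labels on consecutive windows force a \emph{single} mapping class to carry one tail to the other; without this overlap-and-filling device, tail equivalence of labels does not yield one global $\psi\in\Mod$. Second, the converse direction requires a genuinely combinatorial tail-equivalence lemma: two infinite unicorn paths to the same geodesic line asymptotic to $L_0$, starting from arcs that hit $L_0$ on the same side of the same ideal polygon, are \emph{eventually equal up to an index shift} --- not merely close. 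Together with the finiteness of the number of such sides and of lines asymptotic to $L_0$, this is what bounds the number of $E_t$-classes per orbit. One also needs a local characterisation of infinite unicorn paths to see that the relevant set of coding sequences is Borel. These are the substantive steps; your proposal names the destination but not the mechanisms that get there.
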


As a consequence we will derive:

\begin{cor}
\label{cor:curve}
The orbit equivalence relation on $\partial \mathcal {C}(S)$ induced by the action of $\Mod$ is hyperfinite.
\end{cor}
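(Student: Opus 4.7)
The plan is to deduce Corollary~\ref{cor:curve} from Theorem~\ref{thm:arc} by transferring hyperfiniteness along $\Mod$-equivariant Borel maps between Gromov boundaries, treating the cases $n\ge 1$ and $n=0$ separately.

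When $n\ge 1$, I would use the natural $\Mod$-equivariant simplicial map $\phi\colon\mathcal A(S)\to\mathcal C(S)$ sending an arc $\alpha$ to the essential boundary components of a regular neighborhood of $\alpha$ together with its endpoints. Factoring $\phi$ through the arc-and-curve graph $\mathcal{AC}(S)$, one checks that $\phi$ is a $\Mod$-equivariant quasi-isometry: the inclusion $\mathcal A(S)\hookrightarrow\mathcal{AC}(S)$ is a quasi-isometric embedding and, since every essential non-peripheral curve on $S$ is disjoint from some essential arc when $n\ge 1$, also coarsely surjective; the collapse $\mathcal{AC}(S)\to\mathcal C(S)$ is itself a $\Mod$-equivariant quasi-isometry. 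A $\Mod$-equivariant quasi-isometry between Gromov hyperbolic spaces extends to a $\Mod$-equivariant homeomorphism of their Gromov boundaries, which conjugates the two orbit equivalence relations. Since hyperfiniteness is a Borel isomorphism invariant, Theorem~\ref{thm:arc} yields the corollary in this case.

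For $n=0$, I would use the puncture trick. Pick $p\in S$ and let $S'=S\setminus\{p\}$. The Birman exact sequence $1\to\pi_1(S,p)\to\mathrm{Mod}(S')\to\mathrm{Mod}(S)\to 1$ relates the two groups. Using Klarreich's identification of $\partial\mathcal C$ with ending laminations, filling in $p$ defines a $\mathrm{Mod}(S')$-equivariant Borel map $\Pi\colon\partial\mathcal C(S')\to\partial\mathcal C(S)$, where $\mathrm{Mod}(S')$ acts on the target through its Birman quotient. Two laminations on $S'$ have the same filling-in iff they differ by point-pushing around $p$, so the fibres of $\Pi$ are precisely $\pi_1(S,p)$-orbits. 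A short check via the Birman sequence shows that $\Pi$ is class-bijective between the orbit equivalence relations: if $\Pi(L_2)=h\Pi(L_1)$ with $h\in\mathrm{Mod}(S)$, then lifting $h$ to $\tilde h\in\mathrm{Mod}(S')$ places $\tilde h L_1$ and $L_2$ in the same fibre of $\Pi$, hence in the same $\mathrm{Mod}(S')$-orbit. The $n\ge 1$ case applied to $S'$ supplies hyperfiniteness on $\partial\mathcal C(S')$, which transports under the class-bijection.

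The main obstacle I anticipate is in the closed case, namely ensuring that $\Pi$ is Borel, surjective onto $\partial\mathcal C(S)$, and has fibres exactly the $\pi_1(S,p)$-orbits. Ending laminations meeting the chosen puncture $p$ require careful handling: one can either argue that such laminations form a Borel invariant subset that may be treated separately, or Borel-measurably push $p$ off of the lamination. Once $\Pi$ has the stated properties, the class-bijection transports hyperfiniteness by standard facts about countable Borel equivalence relations.
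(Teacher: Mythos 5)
The $n\ge 1$ half of your argument has a fatal gap: the map $\phi\colon\mathcal A(S)\to\mathcal C(S)$ is \emph{not} a quasi-isometry in general, because the inclusion $\mathcal A(S)\hookrightarrow\mathcal{AC}(S)$, while coarsely surjective, is not a quasi-isometric embedding. Whenever $S$ admits an essential proper subsurface $Y\subsetneq S$ containing all the punctures (e.g.\ $S$ of genus $\ge 2$ with one puncture), every essential arc has nonempty subsurface projection to $Y$, so $d_{\mathcal A(S)}(a,b)$ is bounded below by a linear function of the projection distance $d_Y(a,b)$; taking $a,b$ to be arcs contained in $Y$ with huge $\mathcal A(Y)$-distance gives pairs arbitrarily far apart in $\mathcal A(S)$ yet at distance $\le 2$ in $\mathcal{AC}(S)$, since both are disjoint from $\partial Y$. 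Correspondingly the boundaries genuinely differ: $\partial\mathcal A(S)\cong\mathcal{EL}_0(S)$ (minimal laminations filling \emph{some} subsurface containing all punctures, Theorem~\ref{thm:Pho-On}) strictly contains $\partial\mathcal C(S)\cong\mathcal{EL}(S)$, so no equivariant boundary homeomorphism can exist. The route actually available, and the one the paper takes, is to identify $\partial\mathcal C(S)$ with the invariant subset $\mathcal{EL}(S)\subset\mathcal{EL}_0(S)\cong\partial\mathcal A(S)$, check that this subset is Borel (a countable intersection of open conditions, one for each curve $c$: ``$c$ meets $L_0$ transversally''), and use the fact that hyperfiniteness passes to restrictions to Borel subsets \cite[Prop~1.3(iii)]{jkl}. (Your quasi-isometry claim does hold for planar surfaces, where no proper witness subsurface exists, but not in general.)

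Your $n=0$ reduction is the right idea and close to the paper's, but two points need repair beyond what you flag. First, the fibres of the puncture-forgetting map are not single $\pi_1(S,p)$-orbits: over $L\in\mathcal{EL}(S)$ one gets one point-pushing orbit for each complementary region of $L$ into which the puncture may be placed, so the correct statement is that each $\mathrm{Mod}(S)$-class pulls back to a \emph{finite} union of $\mathrm{Mod}(S')$-classes, and one then needs \cite[Prop~1.3(vii)]{jkl} rather than a class-bijection. Second, the difficulty with laminations passing through $p$ is resolved not by a measurable selection but by choosing $p$ once and for all outside the closure of the union of all simple geodesics, which is possible by Birman--Series \cite{BS}; then adding the puncture gives a closed embedding $e\colon\mathcal{EL}(S)\to\mathcal{EL}(S')$ that is a section of the forgetful map, and no lamination ever meets $p$. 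In any case this half rests on the $n\ge 1$ case applied to $S'$, so the gap above must be closed first.
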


This strengthens the results of Hamenst\"adt \cite[Cor~2]{Ham} and Kida \cite[Thm~1.4(ii)]{K}
that this equivalence relation is universally amenable (for definition, see Section~\ref{sec:universal}). 

We will also obtain the following (for the definition of $\mathcal {CL}(S)$, see Section~\ref{sec:complete}):

\begin{cor}
\label{cor:CL}
The orbit equivalence relation on the space of complete geodesic laminations $\mathcal {CL}(S)$ induced by the action of $\Mod$ is hyperfinite.
\end{cor}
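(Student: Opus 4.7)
The plan is to reduce Corollary~\ref{cor:CL} to Corollary~\ref{cor:curve} via a Borel, $\Mod$-equivariant, countable-to-one map from $\mathcal{CL}(S)$ into a countable disjoint union of products of curve graph boundaries of essential subsurfaces of $S$.

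First, I would use the canonical decomposition of a complete geodesic lamination $\lambda\in\mathcal{CL}(S)$ into its finitely many minimal sublaminations $\lambda_1,\dots,\lambda_k$. Each $\lambda_i$ is either a simple closed curve or a minimal filling lamination on an essential connected subsurface $Y_i\subseteq S$; by Klarreich's theorem the latter class of sublaminations is in $\mathrm{Mod}(Y_i)$-equivariant bijection with $\partial\mathcal C(Y_i)$. The remaining isolated leaves of $\lambda$ diagonalize the complementary regions of $\bigcup_i\lambda_i$ into ideal triangles, and once this union is fixed only countably many such completions exist (a finite combinatorial choice of spiraling directions together with a triangulation of each complementary region). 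This yields a Borel $\Mod$-equivariant, countable-to-one map
\[
\Phi\colon \mathcal{CL}(S)\longrightarrow\bigsqcup_{\mathcal T}\prod_{Y\in\mathcal T} \partial\mathcal C(Y),
\]
where $\mathcal T$ ranges over $\Mod$-orbits of essential multi-subsurface decompositions of $S$.

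Next, by Corollary~\ref{cor:curve} applied to each subsurface $Y$ in place of $S$, the orbit equivalence on $\partial\mathcal C(Y)$ induced by $\mathrm{Mod}(Y)$ is hyperfinite. The standard closure properties of hyperfinite countable Borel equivalence relations---closure under Borel subrelations, finite products, countable disjoint unions, and finite extensions---then imply that the $\Mod$-orbit equivalence on the target of $\Phi$ is hyperfinite: on each component of the disjoint union the $\Mod$-action factors through the stabilizer of the decomposition, which is a finite extension (by permutations of equivalent $Y_i$'s) of a subgroup of $\prod_i \mathrm{Mod}(Y_i)$ acting factorwise.

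Since $\Phi$ is Borel, $\Mod$-equivariant, and has countable fibers, the orbit equivalence on $\mathcal{CL}(S)$ is a Borel subrelation of the $\Phi$-pullback of the target orbit equivalence, and hence itself hyperfinite by the subrelation closure property. The main obstacle is verifying that $\Phi$ is Borel with countable fibers: this requires showing that the decomposition into minimal sublaminations and the combinatorics of spiraling isolated leaves depend Borel-measurably on $\lambda$ in the Hausdorff topology on $\mathcal{CL}(S)$ recalled in Section~\ref{sec:complete}, together with a uniform bound on the number of completions of a given union of minimal sublaminations.
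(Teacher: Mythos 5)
Your proposal is correct and follows essentially the same route as the paper: the paper likewise maps a complete lamination to the tuple of its non-circle minimal sublaminations, lands in a product $\mathcal{EL}(Y_1)\times\cdots\times\mathcal{EL}(Y_k)$ (equivalently $\prod_i\partial\mathcal C(Y_i)$ by Klarreich) over a Borel partition of $\mathcal{CL}(S)$ by the filled subsurface $Y$, and concludes via Corollary~\ref{cor:curve}, countability of the fibers, and the closure properties of hyperfiniteness from \cite{jkl}. The ``main obstacle'' you flag is handled in the paper by an explicit $F_{\sigma\delta\sigma}$ description of the set of laminations filling a given $Y$, which is the only substantive detail your sketch leaves open.
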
 
In particular, this equivalence relation is universally amenable. 
Since $\mathcal {CL}(S)$ is compact and Hausdorff, and its point stabilisers are virtually abelian, this gives a new proof of \cite[Thm~1]{Ham} that the action of $\Mod$ on  $\mathcal {CL}(S)$ is topologically amenable (see Section~\ref{sec:universal}). 
This implies that $\Mod$ is exact (which was proved independently to Hamenst\"adt by Kida \cite[Thm~C.5]{K}).

\subsection{Amenability}
\label{sec:universal}
Let $\mu$ be a Borel probability measure on a standard Borel space $X$.
The notion of amenability for a measurable action of a countable group on $(X,\mu)$ was introduced by Zimmer \cite{Z} (see also \cite{zim}) and has many equivalent definitions (see e.g.\ \cite{aeg}; see \cite{Renault} for a more general definition for measured groupoids). It is closely related to the notion of amenability for countable Borel equivalence relations \cite[Def~7.4.5]{gao} (see also \cite[\S3]{Kech1} or \cite[\S3]{Moore}). Namely, a measurable action on $(X,\mu)$ is $\mu$-amenable if and only if $\mu$-almost all stabilisers are amenable and the induced orbit equivalence relation on~$X$ is $\mu$-amenable \cite[Thm~5.1]{aeg}. A countable Borel equivalence relation on~$X$ or a Borel action of a countable group on~$X$ is \emph{universally amenable} if it is $\mu$-amenable for every quasi-invariant Borel probability measure $\mu$ on $X$ (or, equivalently, for every Borel probability measure $\mu$ on $X$ \cite[Cor~10.2]{KM}). In particular, any Borel action of an amenable group is universally amenable.
 
By \cite[Thm~3.3.7]{Renault}, an action of a countable group $G$ by homeomorphisms on a locally compact Hausdorff space $X$ is universally amenable if and only if it is topologically amenable (for definition, see \cite[Def~2.1]{Oza}). If $X$ is compact Hausdorff, then the topological amenability of the action implies the exactness of $G$ \ \cite[Thm~7.2]{AD2}.
 
Boundary actions have been studied extensively from the
point of view of amenability. Connes, Feldman, and Weiss \cite[Cor~13]{cfw} and, independently, Vershik \cite[Thm~2]{ver},
showed that the tail equivalence relation discussed in Section~\ref{subsec:hyper} is universally amenable, which implies that the induced action of the finitely generated free group~$F_n$ on its Gromov boundary $\partial F_n$
is universally amenable.
This was later generalised by
Adams \cite{ada} to all hyperbolic groups (see also \cite{Kai}).
Furthermore, Ozawa \cite{O} proved that the action of a relatively hyperbolic group with amenable parabolic subgroups on the Gromov boundary of its coned-off Cayley graph is topologically amenable. Moreover, Nevo and Sageev proved that the action of a cocompactly cubulated group on a particular subset of its Roller boundary is universally amenable \cite{NS}.
L\'{e}cureux proved that if a group $G$ acts geometrically on a building~$X$, then the action of $G$ on the combinatorial boundary of $X$ is topologically amenable \cite{L}. Finally, Bestvina, Guirardel, and Horbez proved that the action of $\mathrm{Out}(F_n)$ on the Gromov boundary of its free factor complex is universally amenable, see \cite[Thm~6.4]{BGH} and \cite[Prop~7.2]{GHL} (which uses the description of the Gromov boundary of the free factor complex in \cite{BR} and \cite{H2}).

\subsection{Hyperfiniteness}
\label{subsec:hyper}
As shown independently
by Weiss and Slaman--Steel \cite[Thm~7.2.4]{gao}, a Borel equivalence relation $E$ is
hyperfinite if and only if there is a Borel action of
$\mathbb{Z}$ inducing $E$ as its orbit equivalence
relation. Since $\Z$ is amenable, the hyperfiniteness of a Borel equivalence relation implies its universal amenability.
It is a well-known open problem, whether the converse holds, i.e.\ whether a universally amenable Borel equivalence relation is always hyperfinite. Connes, Feldman, and Weiss \cite[Thm~10]{cfw} (see also \cite[Thm~10.1]{KM}) showed that a $\mu$-amenable Borel equivalence relation on $X$ becomes hyperfinite after removing from $X$ a set of $\mu$-measure $0$.

The relative complexity of Borel equivalence relations is
measured by Borel reducibility. Given two equivalence
relations $E$ and $F$ on standard Borel spaces $X$ and $Y$,
respectively, a function $f:X\to Y$ is a \textit{Borel reduction} from $E$ to $F$ if $f$ is a Borel function and
for every $a,b\in X$ we have $a\sim_E b$ if and
only if $f(a)\sim_F f(b)$. A relation $E$ is
\textit{Borel reducible} to $F$, if
there exists a Borel reduction from $E$ to $F$.
The relation $E_0$ is defined
on $\{0,1\}^\mathbb{N}$ (with the product topology) as $(a_i)_{i=0}^\infty\sim_{E_0}(b_i)_{i=0}^\infty$ if $a_i=b_i$ for all~$i$ sufficiently large. It is easy
to see that $E_0$ is hyperfinite. In fact, a
countable Borel equivalence relation is hyperfinite if and only if it is
Borel reducible to~$E_0$ \cite[Thm~7.2.2]{gao}.

Let $\Omega$ be a countable set with discrete topology. The \textit{tail equivalence relation} $E_t$
on $\Omega^\mathbb{N}$ is defined as $(a_i)_{i=0}^\infty\sim_{E_t}(b_i)_{i=0}^\infty$ if
there exists $k\in\mathbb{Z}$ such that $a_i=b_{i+k}$ for
all~$i$ sufficiently large. Dougherty, Jackson, and Kechris showed that
$E_t$ is Borel reducible to $E_0$, and so it is hyperfinite \cite[Cor~8.2]{djk}.
It is not hard to see that the orbit equivalence relation induced by the action of $F_n$ on $\partial F_n$ is Borel reducible to $E_t$ with finite $\Omega$. Hence that orbit equivalence relation on $\partial F_n$ is hyperfinite, which we will shortly express by saying that the \emph{boundary action of $F_n$ is hyperfinite}.

More recently, Huang, Sabok, and Shinko \cite{hss} showed that for cocompactly cubulated
hyperbolic groups, their boundary actions are
hyperfinite. The proof relied on a study of geodesic ray
bundles in hyperbolic groups. While Touikan \cite{T} showed that that
approach does not work for arbitrary hyperbolic groups,
Marquis \cite{M} used it to prove the hyperfiniteness of boundary actions
of groups acting cocompactly on locally finite hyperbolic buildings with trivial chamber stabilisers.
Very recently, Marquis and Sabok \cite{ms} showed the
hyperfiniteness of the boundary action of an
arbitrary hyperbolic group.

\smallskip

\textbf{Organisation.} In Section~\ref{sec:paths} we recall the basics on arcs, laminations, and unicorn paths. In Section~\ref{sec:key} we prove a pair of key lemmas: the local characterisation of Pho-On's infinite unicorn paths, and the tail equivalence for asymptotic infinite unicorn paths. This allows for the proofs of Theorem~\ref{thm:arc} and Corollary~\ref{cor:curve} in Section~\ref{sec:hyper}. We prove Corollary~\ref{cor:CL} in Section~\ref{sec:complete}.

\smallskip

\textbf{Acknowledgements.} We thank Camille Horbez for the input on the $\mathrm{Out}(F_n)$ case, Jean Renault for helpful explanations, and the Referee for valuable remarks.

We thank Antoni Sabok-Przytycki for encouragement.

\section{Unicorn paths}
\label{sec:paths}
\subsection{Arcs and laminations}
\label{subsec:Arcs}

As in the introduction, $S$ is obtained from a closed oriented surface of genus $g$ by removing $n$ points. Thus $S$ has $n$ topological ends, which are called \emph{punctures}. An \emph{oriented arc} on $S$ is a map from $(0,1)$ to $S$ that is proper. A proper map induces a map between topological ends of spaces, and in this sense each endpoint of $(0,1)$ is sent to a puncture of $S$. We will say that the oriented arc \emph{starts} and \emph{ends} at these punctures. A \emph{homotopy} between oriented arcs $a$ and $b$ is a proper map $(0,1)\times[0,1]\to S$ whose restriction to $(0,1)\times \{0\}$ equals~$a$ and whose restriction to $(0,1)\times \{1\}$ equals $b$. In particular, $a$ and $b$ start at the same puncture and end at the same puncture. A \emph{curve} on $S$ is a map from a circle $S^1$ to~$S$.

An oriented arc or a curve is \emph{simple} if it is an embedding. In that case we can and will identify the oriented arc or the curve with its image in~$S$. We record, however, the orientation of the arc, while for the curve we discard it. A curve is \emph{essential} if it is not homotopically trivial. A curve $c\colon S^1\to S$ is \emph{non-peripheral} if it cannot be homotoped into the puncture in the sense that there is no proper map $S^1\times[0,1)\to S$ whose restriction to $S^1\times \{0\}$ is $c$. An oriented arc $a\colon (0,1)\to S$ is \emph{essential} if there is no proper map $(0,1)\times[0,1)\to S$ whose restriction to $(0,1)\times \{0\}$ is $a$. Unless otherwise stated, all oriented arcs in the article are simple and essential, and all curves are simple, essential and non-peripheral.

Suppose that the Euler characteristic $\chi=2-2g-n$ of $S$ is negative. If $n\geq 1$, the \emph{arc graph} $\mathcal {A}(S)$ is the graph whose vertex set $A$ is the set of homotopy classes of oriented arcs on $S$. Two vertices in $A$ are connected by an edge if they can be realised disjointly. Note that since our arcs are oriented, our $\mathcal {A}(S)$ differs from the usual arc graph by replacing each vertex by two.

Allow now $n=0$, but suppose that we are not in one of the exceptional cases where $g=0$ and $n=3$ or $4$, or $g=1$ and $n=1$. Then the \emph{curve graph} $\mathcal {C}(S)$ is the graph whose vertices are the homotopy classes of curves on $S$. Again, two vertices are connected by an edge if they can be realised disjointly. In the exceptional cases the edges of $\mathcal {C}(S)$ are defined differently, but we will not be appealing to that definition in our article. By~\cite{MM} and \cite{MS}, the graphs $\mathcal {C}(S)$ and $\mathcal{A}(S)$ are Gromov-hyperbolic.

We fix an arbitrary complete hyperbolic metric on~$S$.
A \emph{geodesic lamination} on~$S$ is a compact subset of $S$ that is a disjoint union of \emph{leaves} that are geodesic lines and circles in $S$ that do not self-intersect. A geodesic lamination $L$ is \emph{minimal} if its every leaf is dense in $L$. Let $Y\subseteq S$ be a subsurface whose all boundary components are geodesic circles. We say that a geodesic lamination $L\subset Y$ \emph{fills} $Y$ if every curve on $Y$ intersects~$L$. Analogously, a pair of oriented arcs $a,b\subset Y$ \emph{fills} $Y$ if every curve on $Y$ intersects the geodesic representative of $a$ or $b$.

A \emph{peripherally ending lamination} is a minimal geodesic lamination that fills a subsurface $Y$ containing all the punctures of $S$. An \emph{ending lamination} is a minimal geodesic lamination that fills the entire~$S$. Let $\mathcal{EL}(S)\subset \mathcal {EL}_0(S)$ denote the sets of ending, and peripherally ending laminations on $S$, respectively, with the topology given by the following \emph{coarse Hausdorff} convergence. Namely, $L_n\xrightarrow{\text{CH}}L$ if for any subsequence $L_{n_k}$ Hausdorff converging to a geodesic lamination $L'$, we have $L\subset L'$ (see \cite{H1}).
By \cite{Kla} and \cite{Sch} (see also Theorem~\ref{thm:Pho-On} in Section~\ref{sec:key}), the spaces $\mathcal{EL}(S), \mathcal {EL}_0(S)$ can be equivariantly identified with the Gromov boundaries of $\mathcal {C}(S)$ and $\mathcal {A}(S)$.

\subsection{Unicorns}
As in Section~\ref{subsec:Arcs}, let $A$ denote the set of homotopy classes of oriented arcs on $S$.

\begin{defin}
Let $a,b\in A$, and keep the notation $a,b$ for the geodesic oriented arcs representing them. A \emph{unicorn arc} for $a$ and $b$ is the homotopy class of an oriented arc that is a concatenation $a'\cup b'$ for $a'$ an initial segment of~$a$, and $b'$ a terminal segment of $b$, possibly $a'=a,b'=\emptyset,$ or $a'=\emptyset,b'=b$. Note that orienting the arcs replaces the choice of endpoints in \cite[Def~3.1]{HPW}.

The set of all oriented arcs that are such concatenations $a'\cup b'$ can be ordered into a sequence $(a'_i\cup b'_i)_{i=0}^n$ so that for all $0\leq i<n$ we have $a'_{i+1}\subset a'_i$ and $b'_{i+1}\supset b'_i$. We denote by $c_i\in A$ the homotopy class of $a'_i\cup b'_i$ and
we call the sequence $P(a,b)=(c_i)_{i=0}^n\in A^{n+1}$ the \emph{unicorn path} from $a$ to $b$.
\end{defin}

Note that we have $c_0=a$ and $c_n=b$. Moreover, the unicorn path is indeed an edge-path in $\mathcal{A}(S)$:

\begin{rem}[{\cite[Rm~3.2]{HPW}}]
\label{rem:adjacent}
For each $0\leq i<n$, the unicorn arcs $c_i,c_{i+1}$ are adjacent in $\mathcal{A}(S)$.
\end{rem}

Let $L_0$ be a peripherally ending lamination. Let $l$ be a geodesic line on $S$ that does not self-intersect and ends at a puncture in the sense that $l$ contains a geodesic ray properly embedded in $S$. We say that $l$ is \emph{asymptotic to $L_0$}, if $l\subset S\setminus L_0$. Since each puncture of $S$ lies in a once-punctured ideal polygon of $S\setminus L_0$, the number of such $l$ is bounded by the total number of their ideal vertices, which is at most $2|\chi|$.

\begin{defin}[{\cite[\S3.1]{Pho}}]
Let $a\in A$ and keep the notation $a$ for the geodesic oriented arc representing it. Let $l$ be a geodesic line asymptotic to $L_0\in \mathcal {EL}_0(S)$. A \emph{unicorn arc} for $a$ and $l$ is the homotopy class an oriented arc that is a concatenation $a'\cup l'$ for $a'$ an initial segment of $a$, and $l'$ a terminal segment of $l$, possibly $a'=a$ and $l'=\emptyset$.

The set of all oriented arcs that are such concatenations $a'\cup l'$ can be ordered into a sequence $(a'_i\cup l'_i)_{i=0}^\infty$ so that for all $i\geq 0$ we have $a'_{i+1}\subset a'_i$ and $l'_{i+1}\supset l'_i$. We denote by $c_i\in A$ the homotopy class of $a'_i\cup l'_i$ and
we call the sequence $P(a,l)=(c_i)_{i=0}^\infty\in A^\N$ the \emph{infinite unicorn path} from $a$ to $l$.
\end{defin}

\section{Key lemmas}
\label{sec:key}

\begin{defin}
Let $n\in \{3,4,\ldots, \infty\}$. A sequence $(c_i)_{i=0}^n\in A^{n+1}$ is a \emph{locally unicorn path} if for each $0\leq j<k\leq n$ with $j+3\leq k<\infty$, the sequence $(c_i)_{i=j}^k$ is the unicorn path from $c_j$ to $c_k$.
\end{defin}

By Remark~\ref{rem:adjacent}, a locally unicorn path is an edge-path in $\mathcal{A}(S)$. Moreover, by \cite[Lem~3.5]{HPW} each finite unicorn path of length $\geq 3$ is a locally unicorn path. Furthermore, by \cite[Lem~3.4]{Pho} an infinite unicorn path is also locally unicorn.

By \cite[Prop~4.2]{HPW} there is a universal constant $C$ such that each finite unicorn path $P(a,b)$ is at Hausdorff distance $\leq C$ from a geodesic edge-path in $\mathcal{A}(S)$ from $a$ to $b$. Consequently, each locally unicorn path is bounded or converges w.r.t.\ the Gromov product (see \cite[\S7.2]{Ghys}) to a point in $\partial \mathcal{A}(S)$. This leads to the following result of Pho-On (the existence of an equivariant homeomorphism was announced earlier by Schleimer \cite{Sch}).

\begin{thm}[{\cite[\S3.2-3]{Pho}}]
\label{thm:Pho-On}
Let $a\in A$. Let $L_0\in \mathcal {EL}_0(S)$ and let $l$ be a geodesic line asymptotic to~$L_0$. Then $P(a,l)$ is not bounded and its limit $F(L_0)\in \partial \mathcal{A}(S)$ w.r.t.\ the Gromov product depends only on $L_0$. Furthermore,
$F\colon \mathcal{EL}_0(S)\to \partial \mathcal A(S)$ is a $\Mod$-equivariant homeomorphism.
\end{thm}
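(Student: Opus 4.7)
The plan is to proceed in four stages: (i) unboundedness of $P(a,l)$ and definition of $F(a,l)$ as its limit in $\partial\mathcal A(S)$; (ii) independence of the limit from $a$ and $l$, yielding a well-defined $F(L_0)$; (iii) injectivity and surjectivity of $F$; (iv) continuity and equivariance.

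For (i), since $c_i$ contains the terminal segment $l'_i$ of $l$, and $l$ is asymptotic to $L_0$, which fills a subsurface $Y$ containing all the punctures, the geometric intersection of $l'_i$ with any fixed transverse curve on $Y$ tends to infinity. Only finitely many homotopy classes of arcs lie within any bounded $\mathcal A(S)$-distance of~$c_0$, so $(c_i)$ leaves every bounded set. Since $(c_i)$ is locally unicorn by \cite[Lem~3.4]{Pho} and each finite unicorn subpath is within Hausdorff distance $C$ of a geodesic by \cite[Prop~4.2]{HPW}, the sequence Gromov-converges to a unique point $F(a,l)\in\partial\mathcal A(S)$.

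For (ii), I would first compare $P(a,l)$ and $P(a',l)$: for each large $i$ the long common terminal segment $l'_i$ shared by $c_i$ and arcs appearing far enough in $P(a',l)$ lets one splice locally unicorn paths and conclude the two limit points coincide. To handle two different lines $l,l'$ asymptotic to the same $L_0$, I would use minimality of $L_0$: terminal rays of $l$ and $l'$ Hausdorff-accumulate on~$L_0$, so for every $N$ there exist $c_i\in P(a,l)$ and $c'_j\in P(a,l')$ that are disjoint in~$S$ (hence adjacent in $\mathcal A(S)$) and whose wrappings around $L_0$ have length at least $N$, forcing unbounded Gromov products. I expect this to be the main obstacle, since the at most $2|\chi|$ asymptotic lines to $L_0$ must all be shown to produce the same boundary point despite approaching $L_0$ through different ideal polygons of $S\setminus L_0$.

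For (iii), injectivity follows because $F(L_0)=F(L_1)$ forces unbounded Gromov products between $P(a,l_0)$ and $P(a,l_1)$, hence arcs close in $\mathcal A(S)$ carrying long terminal segments of leaves of $L_0$ and $L_1$ respectively; minimality and filling then force $L_0=L_1$. For surjectivity, given $\xi\in\partial\mathcal A(S)$ pick a sequence $(a_n)$ with $a_n\to\xi$; by compactness of the space of geodesic laminations in the Hausdorff topology, the geodesic realisations of $a_n$ subconverge to a lamination containing a minimal sublamination $L_0$, and the HPW tracking combined with $a_n\to\xi$ forces $L_0$ to fill a subsurface containing all the punctures, so $L_0\in\mathcal {EL}_0(S)$ and $F(L_0)=\xi$. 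Finally, $\Mod$-equivariance of $F$ is immediate from naturality, and continuity from the coarse-Hausdorff convergence: if $L_n\xrightarrow{\text{CH}}L$, then suitable asymptotic lines $l_n$ to $L_n$ yield infinite unicorn paths from a fixed basepoint sharing arbitrarily long initial segments with $P(a,l)$, giving $F(L_n)\to F(L)$. A standard Polish-space argument, applying the same coarse-Hausdorff control to a convergent sequence of boundary points to produce a laminar preimage, then upgrades the continuous equivariant bijection to a homeomorphism.
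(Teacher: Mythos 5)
The paper does not prove this statement: Theorem~\ref{thm:Pho-On} is imported wholesale from Pho-On \cite[\S3.2--3]{Pho} (with the homeomorphism statement going back to Klarreich/Schleimer), and the authors use it as a black box. So there is no internal proof to compare against; what you have written is an attempt to reprove Pho-On's theorem from scratch, and as such it has to be judged on its own terms.

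Judged that way, it has a fatal error at the very first step. You deduce unboundedness of $P(a,l)$ from the assertion that ``only finitely many homotopy classes of arcs lie within any bounded $\mathcal A(S)$-distance of $c_0$.'' This is false: the arc graph is locally \emph{infinite} (there are infinitely many homotopy classes of arcs disjoint from a given arc whenever $\chi(S)<-1$, and already the $1$-ball around $c_0$ is infinite). Nor does the growth of $i(c_i,\gamma)$ for a fixed curve $\gamma$ help, since intersection numbers bound arc-graph distance from \emph{above} (roughly $d\leq 2+2\log_2 i$), not from below; arcs of enormous intersection number can be at distance $2$. The actual content here is a Klarreich-type divergence criterion --- a sequence of arcs leaves every bounded set if and only if every subsequence has a further subsequence coarse-Hausdorff converging to a lamination filling a subsurface containing all the punctures --- and proving that criterion is essentially the substance of \cite[\S3.2]{Pho}; it cannot be waved away by local finiteness. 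Two further steps are also unsupported: in (ii) the existence of \emph{disjoint} far-out arcs $c_i\in P(a,l)$, $c'_j\in P(a,l')$ for distinct asymptotic lines $l,l'$ is asserted without argument (both arcs emanate from the same puncture along $a$ and carry segments of lines that may intersect each other), and in (iv) there is no ``standard Polish-space argument'' upgrading a continuous equivariant bijection to a homeomorphism --- $\mathcal{EL}_0(S)$ is not compact, and continuous bijections of Polish spaces need not be open maps --- so continuity of $F^{-1}$ must be proved directly from the coarse-Hausdorff description of convergence in $\partial\mathcal A(S)$.
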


In fact, the local condition characterises infinite unicorn paths:

\begin{lemma}
\label{lem:local characterization}
Let $P$ be a locally unicorn path that is not bounded in~$\mathcal{A}(S)$.
Then~$P$ is an infinite unicorn path.
\end{lemma}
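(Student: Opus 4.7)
The plan is to extract from $P$ a geodesic line $l$ asymptotic to some peripherally ending lamination, so that $P$ equals the infinite unicorn path $P(c_0, l)$. Since $P$ is locally unicorn and unbounded, the discussion preceding Theorem~\ref{thm:Pho-On} shows that $P$ converges in the Gromov product to a point $\xi \in \partial \mathcal{A}(S)$, and then Theorem~\ref{thm:Pho-On} yields a unique $L_0 \in \mathcal{EL}_0(S)$ with $F(L_0) = \xi$. Set $a := c_0$; the task becomes to find a geodesic line $l$ asymptotic to $L_0$ with $P(a, l) = P$.

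A preliminary step identifies the puncture at which $l$ must end. For every $n \geq 4$, the local unicorn hypothesis gives $(c_0, c_1, \ldots, c_n) = P(c_0, c_n)$, so $c_1$ is the homotopy class of a concatenation of an initial segment of $c_0$ with a terminal segment of $c_n$. Since $c_1$ is fixed, the ending puncture of $c_n$ is forced to be independent of $n \geq 4$, and iterating the same reasoning for each index shows that every $c_i$ with $i \geq 1$ ends at one common puncture, which I call $q^\ast$. By Theorem~\ref{thm:Pho-On}, the convergence $c_n \to \xi$ in $\partial \mathcal A(S)$ corresponds to coarse Hausdorff convergence of the geodesic representatives of $c_n$ to $L_0$.

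To extract $l$, I use that $L_0$ fills a subsurface containing every puncture, so $q^\ast$ lies in a once-punctured ideal polygon component of $S \setminus L_0$, with finitely many ideal vertices and hence finitely many candidate geodesic lines asymptotic to $L_0$ and ending at $q^\ast$. The terminal segments of the $c_n$ (each starting at an intersection with $c_0$ and running out to $q^\ast$) grow in length as $n$ increases, and the coarse Hausdorff convergence to $L_0$ forces them eventually to enter the polygon through a single ideal vertex. The resulting Hausdorff limit is a geodesic line $l$ asymptotic to $L_0$ and ending at $q^\ast$, and this is the candidate.

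Finally, writing $P(a, l) = (c'_i)_{i=0}^\infty$, I would verify $c_i = c'_i$ by induction on $i$. Both sequences are locally unicorn paths starting at $a$; passing from $c_i$ to $c_{i+1}$ in $P$ is determined by moving to the next cut point on $c_0$ coming from intersections with $c_n$ (for $n$ large, via local unicorn), while in $P(a, l)$ the analogous cut points come from intersections with $l$. The Hausdorff convergence of the tails of $c_n$ to $l$ ensures these cut points ultimately agree, hence $c_{i+1} = c'_{i+1}$. The main obstacle is precisely this matching: one must show that the cut point on $c_0$ producing $c_{i+1}$ from $c_i$ stabilizes as $n \to \infty$ and equals an intersection point of $c_0$ with $l$. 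This requires a careful unpacking of which pairs (initial segment of $c_0$, terminal segment of $l$) yield simple concatenations and hence appear in the unicorn ordering, combined with the coarse Hausdorff convergence, to ensure the indexing intersection points of $P$ and of $P(a, l)$ agree one by one.
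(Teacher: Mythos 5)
Your plan follows the same broad outline as the paper's proof (identify $L_0$ via Theorem~\ref{thm:Pho-On} and the coarse Hausdorff convergence of the $c_i$, locate the common ending puncture, find a geodesic line $l$ asymptotic to $L_0$ in the once-punctured ideal polygon containing that puncture, and match unicorn arcs), but the two steps that carry all the technical weight are asserted rather than proved, and one of them is asserted in a form that is stronger than what is actually available. First, you claim that the terminal segments of the $c_n$ ``eventually enter the polygon through a single ideal vertex'' and Hausdorff-converge to a single line $l$. Coarse Hausdorff convergence of $c_n$ to $L_0$ does not obviously rule out that different $c_n$ approach the puncture through different spikes of the polygon, so a single limiting line is not immediate. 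The paper sidesteps exactly this difficulty: for each $n$ it only produces \emph{some} line $l$ (a priori depending on $n$) among the finitely many candidates such that the first $n$ terms of $P(c_0,l)$ agree with $P$, and then concludes by pigeonhole over the finite set of candidate lines. You would either need to adopt that pigeonhole structure or supply a genuine argument for the stabilisation of the spike.

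Second, you explicitly flag the matching of unicorn arcs as ``the main obstacle'' and leave it unresolved. This is where the paper's actual work lies: it uses the coarse Hausdorff convergence to choose $N\geq n$ so that $c_N$ meets $L_0$ only at small angle, forcing the component of $c_N$ in the region $D_p$ near the puncture to enter through one of the segments $d^l$, and then builds an explicit bijection $h\colon l'\cap c_0\to c_N'\cap c_0$ (pairs lying in the same component of $D_p\cap c_0$) under which concatenations representing unicorn arcs for $(c_0,l)$ correspond, in an order-preserving and homotopy-preserving way, to concatenations representing unicorn arcs for $(c_0,c_N)$ --- the latter being exactly $c_0,\ldots,c_N$ by the locally unicorn hypothesis. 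Without this bijection (or an equivalent device) the induction you propose has no engine, so as written the proposal is a correct roadmap with the core of the proof still missing.
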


\begin{proof}
Denote $P=(c_i)_{i=0}^\infty\in A^\N$, and keep the notation $c_i$ for the geodesic oriented arcs representing them. Since $P$ is not bounded in~$\mathcal{A}(S)$, it converges to some point $F(L_0)\in\partial \mathcal{A}(S)$. By \cite[Lem~3.9]{Pho}, we have that $c_i$ coarse Hausdorff converge to $L_0\in \mathcal {EL}_0(S)$. Denote $c=c_0$. We claim that for each $n\geq 1$ there is a geodesic line $l$ asymptotic to~$L_0$ such that for each $i\leq n$ the unicorn arc $a_i$ on the infinite unicorn path from $c$ to $l$ coincides with $c_i$. Since there are only finitely many $l$ asymptotic to $L_0$, the lemma follows from the claim.

\begin{figure}
\begin{center}
\includegraphics[width=0.7\textwidth]{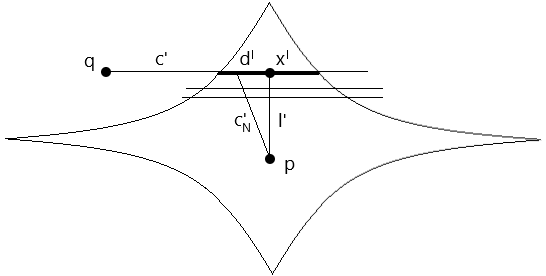}
\end{center}
\caption{Ideal polygon $D$}
\label{fig1}
\end{figure}

To justify the claim, note that since $P$ is a locally unicorn path, all $c_i$ with $i\geq 1$ end at a common puncture $p$. Let $D$ be the ideal polygon of $S\setminus L_0$ containing~$p$. Let $l$ be a geodesic line asymptotic to~$L_0$ ending at $p$. Let $c'\cup l'$ represent the $n$-th unicorn arc on the unicorn path from $c$ to $l$, let $x^l=c'\cap l'$, and let $d^l$ be the segment of~$c$ that is the component of $D\cap c$ containing $x^l$. See Figure~\ref{fig1}. Let $D_p$ be the component of $D-\bigcup_ld^l$ containing $p$, where the union is taken over all the geodesic lines $l$ asymptotic to $L_0$ and ending at $p$. Let $\alpha>0$ be the minimum possible angle that makes with $L_0$ a geodesic ray in $D_p$ starting on $L_0$ and ending at $p$. Since $(c_i)_{i=0}^\infty\xrightarrow{\text{CH}}L_0$, there is $N\geq n$ such that $c_N$ does not intersect $L_0$ at angle $\geq \alpha$. Consequently, the component $c_N'$ of $c_N\cap D_p$ ending at $p$ starts on $d^l$ for some $l$ (see Figure~\ref{fig1}).

Let $q$ be the puncture at which $c$ starts. We have a bijection $h\colon l'\cap c\to c_N'\cap c$ such that each pair $x,h(x)$ lies in the same component of $D_p\cap c$. Furthermore, for each $x\in l'\cap c$, the segments $qx\subset c$ and $xp\subset l$ intersect only at $x$ if and only if the segments $qh(x)\subset c$ and $h(x)p\subset c_N$ intersect only at $h(x)$. In other words, the concatenation $qx\cup xp$ represents a unicorn arc for $c$ and $l$ if and only if the concatenation $qh(x)\cup h(x)p$ represents a unicorn arc for $c$ and $c_N$.
Moreover, these two oriented arcs are homotopic. Finally, this correspondence preserves the order of unicorn arcs. Thus, for $0\leq i\leq n,$ we have $a_i=c_i$, justifying the claim.
\end{proof}

Let $L_0\in \mathcal{EL}_0(S)$. We define an equivalence relation $\sim_{L_0}$ on $A$, by declaring $a\sim_{L_0}b$ if the geodesic representatives of $a,b$ start at the same puncture and their first points in~$L_0$ lie on the same side of the ideal polygon of $S\setminus L_0$ containing that puncture. Note that $\sim_{L_0}$ has at most $2|\chi|$ equivalence classes.

We now prove a tail equivalence lemma that will later allow us to reduce the orbit equivalence on $\partial \mathcal A(S)$ to $E_t$.

\begin{lemma}
\label{lem:unite}
Let $L_0\in \mathcal{EL}_0(S)$, and let $a,b\in A$ with $a\sim_{L_0}b$. Then for each geodesic line $l$ asymptotic to $L_0$, the unicorn path $(a_i)_{i=0}^\infty$ from $a$ to $l$ and the unicorn path $(b_i)_{i=0}^\infty$ from $b$ to $l$ satisfy $a_i=b_{i+k}$ for some $k\in \Z$ and all $i$ sufficiently large.
\end{lemma}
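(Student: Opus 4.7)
The approach is to work in the universal cover $\tilde S\cong\mathbb{H}^2$ and exhibit a natural order-preserving bijection, induced by lifts of $l$, between the tails of the crossing sequences $\{x_i^a\}\subseteq a\cap l$ and $\{x_j^b\}\subseteq b\cap l$ indexing the two unicorn paths. Lift $a,b$ to oriented geodesics $\tilde a,\tilde b$ starting from a common lift $\tilde q\in\partial\mathbb{H}^2$ of the puncture $q$. The hypothesis $a\sim_{L_0}b$ means the first crossings of $\tilde a,\tilde b$ with the lifted lamination $\tilde L_0$ lie on sides of the lifted polygon around $\tilde q$ that differ by an integer power $\phi^j$ of the parabolic $\phi$ fixing $\tilde q$. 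After replacing $\tilde b$ by $\phi^j\tilde b$, which still emanates from $\tilde q$, we may assume $\tilde a$ and $\tilde b$ cross the same lift $\tilde s$ of the side $s$.

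For each $x\in a\cap l$ there is a unique lift $\tilde l_x$ of $l$ meeting $\tilde a$ at the lift $\tilde x$ of $x$. As $i\to\infty$, the points $\tilde x_i^a$ ascend toward $\tilde q$ along $\tilde a$. In the upper half-plane model with $\tilde q=\infty$, the rays $\tilde a,\tilde b$ are two vertical lines rising to $\infty$, and $\tilde l_{x_i^a}$ is a geodesic half-circle whose $y$-coordinate at $\tilde a$ equals the height of $\tilde x_i^a$ and therefore tends to infinity. Consequently $\tilde l_{x_i^a}$ has diameter tending to infinity, and for $i$ sufficiently large its real-line endpoints straddle both vertical rays, so $\tilde l_{x_i^a}$ crosses $\tilde b$ at a unique point $\tilde y_i$. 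Set $\psi(x_i^a):=\pi(\tilde y_i)\in b\cap l$. The symmetric construction gives an inverse on tails, so $\psi$ is a bijection between $\{x_i^a:i\geq N\}$ and $\{x_j^b:j\geq M\}$; monotonicity of horocyclic heights shows it preserves the unicorn-path order, hence $\psi$ induces a constant index shift $k\in\Z$.

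The matched arcs then coincide in $A$. The arc $a_i$ lifts to the path $\tilde q\to\tilde x_i^a\to\tilde p_i$ along $\tilde a$ and then $\tilde l_{x_i^a}$, where $\tilde p_i$ is the lift of $p$ at the other end of $\tilde l_{x_i^a}$; the arc $b_{i+k}$ lifts to $\tilde q\to\tilde y_i\to\tilde p_i$ via $\tilde b$ and the very same $\tilde l_{x_i^a}$. Both lifts run between the same pair of points in the simply connected $\tilde S$, hence are homotopic rel endpoints and project to the same class in $A$. Simplicity of the projected concatenation $qy_i\cup y_ip$ is controlled by the shared lift $\tilde l_{x_i^a}$ and the configuration of lifts of $a,b,L_0$ around $\tilde q$, so it inherits from that of $qx_i^a\cup x_i^ap$.

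The main obstacle is the geometric straddling claim: making precise that for all sufficiently large $i$ the half-circles $\tilde l_{x_i^a}$ necessarily cross $\tilde b$ (not only $\tilde a$), in a way compatible with the unicorn-path ordering and with preservation of simplicity. This requires careful use of the cusp geometry around $\tilde q$, exploiting the constraint $\tilde l_{x_i^a}\subset\tilde S\setminus\tilde L_0$ together with the bounded horizontal separation between $\tilde a$ and $\tilde b$ near $\tilde q$ guaranteed by $a\sim_{L_0}b$.
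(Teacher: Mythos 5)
Your overall strategy --- matching the intersection points of $l$ with $a$ and with $b$ near the common leaf of $L_0$ that both arcs first cross, then checking that matched concatenations are homotopic and simultaneously simple --- is essentially the paper's strategy, transported to the universal cover. But the step you yourself flag as the main obstacle is where the argument genuinely breaks, and it breaks for a more basic reason than a missing estimate: the premise is false. The tail of the unicorn path $P(a,l)$ is \emph{not} indexed by intersection points ascending to the ideal point $\tilde q$. Writing $x$ for the first point of $a$ on $L_0$, the meeting points $a'_i\cap l'_i$ do move monotonically along $a$ towards its starting puncture $q$, but they converge to $x$ (from the side opposite to $q$): accumulation points of $a\cap l$ lie in $a\cap L_0$ because $l$ accumulates only on $L_0$ and enters the cusps boundedly, and a sequence in the initial segment $qx$ moving towards $q$ cannot accumulate at $x$, so all but finitely many meeting points lie just beyond $x$. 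Hence in your upper half-plane picture the heights of $\tilde x_i^a$ converge to the \emph{finite} height of $\tilde x$, the diameters of the half-circles $\tilde l_{x_i^a}$ do not tend to infinity, and the straddling conclusion does not follow. (Even granting unbounded diameter, a half-circle of huge diameter crossing $\tilde a$ high up need not separate the feet of $\tilde b$, so ``large diameter $\Rightarrow$ crosses $\tilde b$'' is unsound on its own.)

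The missing idea is the one the paper uses: since $L_0$ is minimal, one can choose a second leaf segment $x'y'$ of $L_0$ close and nearly parallel to the segment $xy$ of the common leaf, so that $xx'\subset a$, $x'y'\subset L_0$, $y'y\subset b$, $yx\subset L_0$ bound an embedded rectangle $B$. Because $l$ is disjoint from $L_0$ it cannot cross the two $L_0$-sides of $B$, and because geodesics bound no bigons it cannot enter and leave $B$ through the same side; hence every component of $B\cap l$ runs from the $a$-side to the $b$-side. This is what produces the order-preserving bijection $h\colon xx'\cap l\to yy'\cap l$ (equivalently, what forces the relevant lifts of $l$ to cross both $\tilde a$ and $\tilde b$), and it is also what allows one to compare the simplicity conditions for $a'\cup l'$ and $b'\cup l''$, a point you only gesture at. Your proposed repair via ``cusp geometry around $\tilde q$'' and ``bounded horizontal separation near $\tilde q$'' looks in the wrong place: the matching happens at finite height, along the leaf of $L_0$ through $x$ and $y$, not in the cusp.
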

\begin{proof}
Let $a,b\in A$ with $a\sim_{L_0}b$ and keep the notation $a,b$ for the geodesic oriented arcs representing them. Let $x,y$ be the first points on $a,b$ in $L_0$. Since $a\sim_{L_0}b$, there is a geodesic segment $xy\subset L_0$. Furthermore, since $L_0$ is minimal, there are segments $xx',yy'$ in $a,b$ such that $x'y'$ is a geodesic segment in $L_0$, and $xx'y'y$ bounds a topological disc $B$ embedded in~$S$. See Figure~\ref{fig2}.

Consequently, the components of the intersection $B\cap l$ are geodesic segments joining $xx'$ to $yy'$, which yields a bijection $h\colon xx'\cap l\to yy'\cap l$. Let $a'\subset a$ be an initial segment of $a$ ending in $z\in xx'\cap l$, and let $b'$ be the initial segment of~$b$ ending in $h(z)$. Furthermore, let $l',l''$ be the terminal segments of $l$ starting in $z,h(z)$, respectively. Assume without loss of generality $l''\subset l'$, as in Figure~\ref{fig2}.

\begin{figure}
\begin{center}
\includegraphics[width=0.45\textwidth]{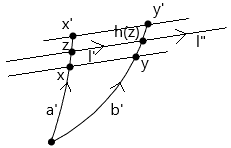}
\end{center}
\caption{Rectangle $B$}
\label{fig2}
\end{figure}

Note that $xz$ intersects $zh(z)\subset l'$ only at $z$. Furthermore, $xz$ is disjoint from $l''$ if and only if $yh(z)$ intersects $l''$ only at $h(z)$. Consequently, the concatenation $a'\cup l'$ represents a unicorn arc if and only if $b'\cup l''$ represents a unicorn arc. Moreover, these two oriented arcs are homotopic. Finally, this correspondence preserves the order of unicorn arcs, and all $a_i$ for $i$ sufficiently large are accounted for in this way.
\end{proof}

\section{Hyperfiniteness}
\label{sec:hyper}
We fix a basepoint $a_0\in A$.

\begin{defin}
Let $\mathcal P\subset A^\N$ be the set of infinite unicorn paths from~$a_0$ to any geodesic line asymptotic to any $L_0\in \mathcal {EL}_0(S)$. Let $f\colon \mathcal P\to \partial \mathcal A(S)$ be the map assigning to such path its limit $F(L_0)\in \partial \mathcal A(S)$ w.r.t.\ the Gromov product (see Theorem~\ref{thm:Pho-On}).
\end{defin}

Note that $f$ is finite-to-one, since $a_0$ is fixed and there are finitely many geodesic lines asymptotic to a given $L_0\in \mathcal {EL}_0(S)$.

\begin{rem}
\label{rem:Borel}
We equip the countable set $A$ with the discrete topology and $A^\N$ with the product topology.
Then the set $\mathcal P\subset A^\N$ is Borel. Indeed, by Lemma~\ref{lem:local characterization}, $\mathcal P$ is the set of locally unicorn paths that are not bounded. The set of locally unicorn paths is closed in~$A^\N$, since each of the conditions on $(c_i)_{i=j}^k$ to be a unicorn path is closed. Furthermore, for each $n\geq 0$, the set of sequences in $A^\N$ at distance $\leq n$ from $a_0$ is closed, so the set of sequences in $A^\N$ at bounded distance from~$a_0$ is a countable union of closed sets. Consequently, $\mathcal P$ is a countable intersection of open sets.

Since locally unicorn paths are uniformly Hausdorff close to geodesic edge-paths, and the function $f$ assigns their limits in $\partial \mathcal A(S)$, we have that $f$ is continuous w.r.t.\ the metric on $\partial \mathcal A(S)$ defined using the Gromov product.
\end{rem}

Let $\mathcal T$ be the countable set of finite length edge-paths in $\mathcal A(S)$, up to the action of $\Mod$, equipped with the discrete topology. For an infinite unicorn path $P=(c_i)_{i=0}^\infty$, given $i\geq 0$ and $j=i+1,\ldots,$ the subsurfaces $\Sigma_{i,j}\subseteq S$ filled by $c_i$ and $c_j$ form an ascending sequence $\Sigma_{i,i+1}\subseteq \Sigma_{i,i+2}\subseteq \cdots$ that stabilises with some subsurface which we call $\Sigma_i\subseteq S$. For each $i\geq 0$, let $m(i)>i+1$ be minimal satisfying $\Sigma_{i+1,m(i)}=\Sigma_{i+1}$.
Let $T_i=(c_j)_{j=i}^{m(i)}$, and let $[T_i]$ be the equivalence class of $T_i$ in $\mathcal T$.
Let $g\colon \mathcal P\to \mathcal T^\N$ be the map defined by $g(P)=([T_i])_{i=0}^\infty$. Let $E_t$ be the tail equivalence relation on $\mathcal T^\N$ described in Section~\ref{sec:intro} (with $\Omega=\mathcal T$).

Note that the definition of $g$ can be analogously extended to infinite unicorn paths $P\notin \mathcal P$ (i.e.\ to infinite unicorn paths that start at points distinct from $a_0$), which we will make use of later on.

\begin{rem}
\label{rem:Borel2}
We equip $\mathcal T^\N$ with the product topology. Then the map $g\colon \mathcal P\to \mathcal T^\N$ is Borel. Indeed, for all $0\leq i<j,$ the maps $P\to \Sigma_{i,j}$ are continuous maps from $\mathcal P$ to the countable discrete set of subsurfaces of~$S$, and hence their limits $P\to \Sigma_{i}$ are Borel. Thus, for all $0\leq i<j$, the subset of $\mathcal P$ defined by the identity $\Sigma_{i,j}=\Sigma_i$ is Borel, and so the maps $m(i)\colon \mathcal P\to \N$ are Borel. Consequently, all the maps $ [T_i]\colon \mathcal P\to \mathcal T$ are Borel, as desired.
\end{rem}

\begin{lemma}
\label{lem:coding}
Let $P,P'\in \mathcal P$. If $g(P)\sim_{E_t}g(P')$, then there is $\psi\in \Mod$ satisfying $\psi f(P)=f(P')$. Conversely, for each orbit $\omega$ of the action of $\Mod$ on $\partial \mathcal A(S)$, there are finitely many equivalence classes of $E_t$ on $\mathcal T^\N$ containing all $g(P)$ for $P\in \mathcal P$ with $f(P)\in \omega$.
\end{lemma}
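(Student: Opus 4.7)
The plan is to prove both implications by combining the combinatorial structure of the $T_i$ with fellow-travelling of unicorn paths along $\mathcal A(S)$-geodesics.

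For part~1, let $\psi_i\in\Mod$ witness $\psi_iT_i(P)=T_{i+k}(P')$ for $i\geq N$. Write $L_0=F^{-1}(f(P))$, $L'_0=F^{-1}(f(P'))$, and let $Y\subseteq S$ be the subsurface filled by $L_0$. Since the arcs $c_i$ coarse-Hausdorff converge to $L_0$ (as in the proof of Lemma~\ref{lem:local characterization}), they eventually lie in $Y$, so $\Sigma_i=Y$ for all $i\geq M$ large. A direct comparison using $\Sigma_{i+2,j}\subseteq\Sigma_{i+1,j}$ then shows $m(i+1)\geq m(i)$ in this range, so the overlap of $T_i(P)$ and $T_{i+1}(P)$ contains the oriented arcs $c_{i+1},\dots,c_{m(i)}$ which fill $Y$. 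The element $\psi_{i+1}^{-1}\psi_i$ therefore fixes a filling collection of oriented arcs of~$Y$ pointwise, acts trivially on~$Y$ as a mapping class, and in particular stabilises $L_0\subset Y$, so $\psi_i L_0=\hat L_0$ is a constant peripherally ending lamination for $i\geq M$.

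To identify $\hat L_0$ with $L'_0$, I would invoke \cite[Prop~4.2]{HPW}: every unicorn path lies at Hausdorff distance at most some universal $C$ from an $\mathcal A(S)$-geodesic. Let $\gamma_i$ be a geodesic ray close to $\psi_i P$ ending at $F(\hat L_0)$ and $\gamma'$ a geodesic ray close to $P'$ ending at $f(P')$. For each $n\in[i,m(i)]$, the equality $\psi_i c_n=c'_{n+k}$ places the same arc-graph vertex within $C$ of both rays, forcing $\gamma_i$ and $\gamma'$ to fellow-travel within $2C$ on a segment. As $i\to\infty$ the length $m(i)-i+1$ of this segment tends to infinity, and since $c'_{m(i)+k}$ sits arbitrarily far along~$\gamma'$, the fellow-travelling also reaches arbitrarily far along~$\gamma'$; Gromov-hyperbolicity of $\mathcal A(S)$ then forces $\gamma_i$ and $\gamma'$ to share an endpoint at infinity, giving $F(\hat L_0)=f(P')$. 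Taking $\psi=\psi_M$ and using $\Mod$-equivariance of~$F$ yields $\psi f(P)=f(P')$.

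For part~2, fix $\xi\in\omega$, set $\hat L_0=F^{-1}(\xi)$, and enumerate the (at most $2|\chi|$) geodesic lines $l_1,\dots,l_s$ asymptotic to $\hat L_0$ together with representatives $b_1,\dots,b_t$ of the finitely many $\sim_{\hat L_0}$-equivalence classes on~$A$. For each pair $(i,j)$ set $U_{ij}=P(b_i,l_j)$ and extend $g$ to form $g(U_{ij})\in\mathcal T^\N$. Now for any $P\in\mathcal P$ with $f(P)\in\omega$, pick $\psi\in\Mod$ with $\psi L_0=\hat L_0$; then $\psi P$ is an infinite unicorn path from $\psi a_0$ to $\psi l$, with $\psi l=l_j$ for some~$j$ and $\psi a_0\sim_{\hat L_0}b_i$ for some~$i$. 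Lemma~\ref{lem:unite} applied to $\psi a_0,b_i$ and the line $l_j$ yields $g(\psi P)\sim_{E_t} g(U_{ij})$, while $\Mod$-invariance of the classes $[T_i]\in\mathcal T$ gives $g(P)=g(\psi P)$; hence $g(P)$ lies in one of the $st$ tail classes of the $g(U_{ij})$.

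The main obstacle is the identification $\hat L_0=L'_0$ in part~1. The overlap argument only yields the constancy of $\psi_i L_0$; extracting from this the equality with $L'_0$ requires the HPW fellow-travelling bound together with the observation that the matched segment $\psi_i c_n=c'_{n+k}$ not only grows in length with $i$ but also reaches arbitrarily far along the geodesic ray close to~$P'$.
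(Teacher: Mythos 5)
Your converse direction is essentially the paper's argument: fix reference paths $P(b_i,l_j)$ indexed by the finitely many $\sim_{\hat L_0}$-classes and the finitely many lines asymptotic to $\hat L_0$, translate a given $P$ into this picture by an element of $\Mod$, and combine Lemma~\ref{lem:unite} with the $\Mod$-invariance of $g$. That half is correct as written.

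The forward direction has a genuine gap, and it sits exactly where you flag it. Your overlap argument correctly yields that consecutive witnesses satisfy $\psi_{i+1}^{-1}\psi_i|_{\Sigma_{i+1}}=\mathrm{id}$ (for this you do not need the shaky intermediate claims that the $c_i$ eventually lie in $Y$, that $\Sigma_{i+2,j}\subseteq \Sigma_{i+1,j}$, or that $m(i+1)\geq m(i)$: whichever of $m(i),m(i+1)$ is smaller, the overlap of $T_i$ and $T_{i+1}$ contains either the pair $c_{i+1},c_{m(i)}$ filling $\Sigma_{i+1}$ or the pair $c_{i+2},c_{m(i+1)}$ filling $\Sigma_{i+2}$). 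But the fellow-travelling step meant to identify $\hat L_0$ with $L_0'$ does not go through: it rests on the assertion that the matched segment has length $m(i)-i+1\to\infty$, and nothing forces this. By definition $m(i)$ is merely the \emph{first} index at which $c_{i+1}$ and $c_{m(i)}$ fill $\Sigma_{i+1}$, and this gap can stay bounded along the whole path. A common subsegment of bounded diameter located far out on both rays does not pin down a common endpoint at infinity, so hyperbolicity cannot close the argument as written.

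The repair is already contained in your own setup, and it is what the paper does. Since $\psi_{i+1}^{-1}\psi_i$ is trivial on $\Sigma_{i+1}$ and, by the very definition of $\Sigma_{i+1}$, every $c_n$ with $n>i+1$ lies in $\Sigma_{i+1}$, the element $\psi_{i+1}^{-1}\psi_i$ fixes not only $L_0$ but also $c_{m(i)+1},\dots,c_{m(i+1)}$; hence $\psi_i T_{i+1}=T'_{i+1+k}$, and by induction a \emph{single} $\psi=\psi_M$ satisfies $\psi c_i=c'_{i+k}$ for all $i\geq M$. Then $\psi P$ and $P'$ share a tail, so $\psi f(P)=f(P')$ immediately, with no boundary-convergence or fellow-travelling argument needed. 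In other words, you proved that the $\psi_i$ all agree on the subsurface carrying the entire tail of $P$; pushing that one step further makes the identification of limits trivial.
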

\begin{proof}
Denote $P=(c_i)_{i=0}^\infty,P'=(c'_i)_{i=0}^\infty$. Let $T_i'$ be defined for $P'$ analogously as $T_i$ was for $P$. If $g(P)\sim_{E_t}g(P')$, then there are $k\in \Z,j\in \N$ such that
$[T_i]=[T'_{i+k}]$ for all $i\geq j$. In particular, there is $\psi\in \Mod$ with $\psi T_j=T'_{j+k}$. We will show inductively that $\psi T_i=T'_{i+k}$ for all $i\geq j$, so in particular $\psi c_i=c'_{i+k}$ implying $\psi f(P)=f(P')$.

Suppose that we have established $\psi T_i=T'_{i+k}$ for some $i\geq j$. If $m(i+1)\leq m(i)$, then $\psi T_{i+1}=T'_{i+1+k}$ is immediate, so we can assume $m(i+1)> m(i)$. Let $\rho\in \Mod$ be such that $\rho T_{i+1}=T'_{i+1+k}$. Then $\rho^{-1}\psi$ fixes all $c_{i+1},\ldots ,c_{m(i)}$. Thus the restriction of $\rho^{-1}\psi$ to the subsurface $\Sigma_{i+1}\subset S$, which $c_{i+1}$ and $c_{m(i)}$ fill,
is the identity map. By the definition of $\Sigma_{i+1}$, we have that $c_{m(i)+1}, \ldots, c_{m(i+1)}$ all lie in~$\Sigma_{i+1}$. This implies that $\rho^{-1}\psi$ fixes them and so $\psi T_{i+1}=T'_{i+1+k}$, completing the induction.

For the converse, let $\omega$ be the orbit under $\Mod$ of some $F(L_0)\in \partial \mathcal A(S)$. Let $l_1,
\ldots, l_n$ be the finitely many geodesic lines asymptotic to~$L_0$. Choose $a_1,\ldots, a_p\in A$ that are representatives of the equivalence classes of $\sim_{L_0}$ distinct from the one containing~$a_0$. For $0\leq q\leq p$ and $1\leq j\leq n$, let $P_{qj}=P(a_q, l_j)$.

Let $P=(c_i)_{i=0}^\infty\in \mathcal P$ with $f(P)\in \omega$. By Theorem~\ref{thm:Pho-On}, we have $P=P(a_0,l)$ where $l$ is a geodesic line asymptotic to $\psi L_0$ for some $\psi\in \Mod$. In particular, for some $1\leq j\leq n$ we have $l=\psi l_j$. Thus $\psi^{-1}P=P(\psi^{-1}a_0, l_j)$. Choose $0\leq q\leq p$ so that $a_q\sim_{L_0}\psi^{-1}a_0$. By Lemma~\ref{lem:unite}, writing ${P}_{qj}=(b_i)_{i=0}^\infty$, we have $\psi^{-1}c_i=b_{i+k}$ for some $k\in \Z$ and all $i$ sufficiently large. We have then that $g({P}_{qj})$ and $g(\psi^{-1}P)$, hence also $g(P)$, are tail equivalent.
\end{proof}

\begin{proof}[Proof of Theorem~\ref{thm:arc}]
Write $E$ for the equivalence relation on
$\partial\mathcal{A}(S)$ induced by the action of
$\mathrm{Mod}(S)$ and write $E^*$ for the equivalence relation
on $\mathcal P$ that is the pullback of
$E$ via $f$, i.e.\ $P\sim_{E^*} P'$ if $f(P)\sim_E
f(P')$. Since $E$ is Borel and countable, and $f$ is Borel and finite-to-one, we have that $E^*$ is also Borel and countable.

Since $f$ is a Borel finite-to-one function, it
has a Borel right inverse by the Lusin--Novikov
uniformisation theorem \cite[Thm~18.10]{Kech}. Consequently, $E$ is Borel reducible to $E^*$.
Thus it is enough to show that $E^*$ is hyperfinite.

Write $E_t^*$ for the equivalence relation on $\mathcal P$ that is the pullback of~$E_t$ via~$g$. Since $E_t$ is Borel, and $g$ is Borel, we have that $E_t^*$ is Borel. By Lemma~\ref{lem:coding}, we have $E_t^*\subseteq E^*$ and every equivalence class of $E^*$
contains finitely many equivalence classes of $E^*_t$. (In particular, $E_t^*$ is countable.)
Thus by \cite[Prop~1.3(vii)]{jkl} it is enough to show that $E_t^*$ is hyperfinite.

Note that $g$ is a Borel reduction of $E_t^*$ to $E_t$. Thus since $E_t$ is
hyperfinite \cite[Cor~8.2]{djk}, we have that $E_t^*$ is
hyperfinite as well.
\end{proof}

\begin{proof}[Proof of Corollary~\ref{cor:curve}]
Assume first that $S$ has $n\geq 1$ punctures. Then by \cite[Thm~1.3]{Kla}, Theorem~\ref{thm:arc}, and \cite[Prop~1.3(iii)]{jkl} it suffices to prove that $\mathcal {EL}(S)$ is a Borel subset of $\mathcal {EL}_0(S)$.
Indeed, $L_0\in \mathcal {EL}_0(S)$ is a minimal filling lamination if and only if each geodesic representative of a curve $c$ on $S$ intersects~$L_0$ and does it transversally. Given $c$, this is an open condition, and so $\mathcal {EL}(S)$ is a countable intersection of open sets.

Secondly, assume $n=0$ and let $S'$ be the surface obtained from~$S$ by adding one puncture at a point outside the closure of the union of all embedded geodesic circles and
lines, which exists by \cite[Thm~I]{BS}. This
induces a closed embedding
$e\colon \mathcal {EL}(S)\to \mathcal {EL}(S')$, which is a section for the map $r\colon \mathcal {EL}(S')\to \mathcal {EL}(S)$ defined by forgetting the puncture. See \cite[\S4.2]{Pho} for details.
Thus for each $L_1,L_2\in \mathcal {EL}(S)$, with $\psi'e(L_1)=e(L_2)$ for some $\psi'\in \mathrm{Mod}(S')$, we have that the image $\psi\in \mathrm{Mod}(S)$ of $\psi'$ under the puncture forgetting map
$\mathrm{Mod}(S')\to\mathrm{Mod}(S)$ satisfies $\psi(L_1)=L_2$.

Conversely, let $L\in \mathcal {EL}(S)$ and let $R_1,\ldots, R_n\subset S$ be the components of $S\setminus L$. For $1\leq j\leq n$, let $L_j$ be a lamination in $\mathcal {EL}(S')$ obtained from~$L$ by adding a puncture in $R_j$, under an arbitrary identification with~$S'$. All such identifications differ by $\mathrm{Mod}(S')$, so the resulting orbit $[L_j]$ in $\mathcal {EL}(S')$ does not depend on our choice. Since $e$ is a section for $r$, we have $e(L)\in \bigcup_{j=1}^n [L_j]$. Analogously, for any
$\psi\in \Mod$, we have $e(\psi(L))\in \bigcup_{j=1}^n [L_j]$.

Consequently, under the identification of $\mathcal {EL}(S)$ with $e(\mathcal {EL}(S))$, each orbit of $\Mod$ on $\mathcal {EL}(S)$ consists of the intersections of finitely many orbits of $\mathrm{Mod}(S')$ on $\mathcal {EL}(S')$ with $e(\mathcal {EL}(S))$. Thus by \cite[Prop~1.3 (iii,vii)]{jkl}, the hyperfiniteness of the action of $\mathrm{Mod}(S)$ on $\mathcal {EL}(S)$ follows from
the hyperfiniteness of the action of $\mathrm{Mod}(S')$ on $\mathcal {EL}(S')$.
\end{proof}

\section{Complete geodesic laminations}
\label{sec:complete}
A geodesic lamination $L$ on $S$ is \emph{complete}, if each component of $S\setminus L$ is an ideal triangle or a once-punctured monogon, and $L$ lies in the closure (in the Hausdorff topology on the space of compact subsets of $S$) of the set of embedded geodesic circles. By $\mathcal CL(S)$ we denote the space of complete geodesic laminations with the Hausdorff topology, which is compact and Hausdorff (see \cite[\S2.1]{Ham}). 

\begin{proof}[Proof of Corollary~\ref{cor:CL}]
For a complete geodesic lamination $L$, let $L'$ denote the union of minimal sublaminations of $L$ that are not embedded geodesic circles. Let 
$Y(L)$ denote the subsurface of $S$ filled by $L'$. Given a subsurface $Y\subseteq S$, let $\mathcal CL(S,Y)\subset \mathcal CL(S)$ denote the subspace of laminations $L$ with $Y(L)=Y$. We claim that each $\mathcal CL(S,Y)$ is a Borel subset of $\mathcal CL(S)$, and hence $\Mod  \mathcal CL(S,Y)$ is Borel as well. 

Indeed, for a lamination $L$ in $\mathcal CL(S,Y)$, the union $Z$ of the non-isolated leaves of $L$ on $S\setminus Y$ is a union of disjoint (geodesic representatives of) curves on $S\setminus Y$. Thus a complete geodesic lamination $L$ belongs to $\mathcal CL(S,Y)$ if and only if 
\begin{itemize}
\item
each curve $c$ on $Y$ intersects $L$ transversally infinite number of times ($G_\delta$ condition), and 
\item
there exists a union of disjoint curves $Z$ on $S\setminus Y$, such that each curve $c$ on $S\setminus (Y\cup Z)$ or in $Z\cup\partial Y$	does not intersect $L$ transversally infinite number of times ($F_{\sigma\delta\sigma}$ condition). 
\end{itemize}
Thus $\mathcal CL(S,Y)$ is an $F_{\sigma\delta\sigma}$ set, justifying the claim.

By \cite[Prop~1.3(v)]{jkl}, to prove that the orbit equivalence relation on  $\mathcal CL(S)$ induced by the action on $\Mod$ is hyperfinite, it suffices to show that its restriction to each $\Mod  \mathcal CL(S,Y)$ is hyperfinite. By \cite[Prop~1.3(vi)]{jkl}, it suffices to show that the orbit equivalence relation $E$ on  $\mathcal CL(S,Y)$ induced by the action of the stabiliser $\Mod_Y$ of $Y$ in $\Mod$ is hyperfinite.

Let $Y_1,\ldots, Y_k$ be the components of $Y$, where we treat all geodesic boundary components as punctures. Let
$g\colon \mathcal CL(S,Y)\to \mathcal{EL}(Y_1)\times \cdots \times \mathcal{EL}(Y_k)$ be the map assigning to each $L$ the components of its sublamination $L'$. In the case where $Y=\emptyset$, the product $\mathcal{EL}(Y_1)\times \cdots \times \mathcal{EL}(Y_k)$ should be understood as a point. By Corollary~\ref{cor:curve} and \cite[Prop~1.3(iv)]{jkl}, the orbit equivalence relation on $\mathcal{EL}(Y_1)\times \cdots \times \mathcal{EL}(Y_k)$ induced by the action of $\mathrm{Mod}(Y_1)\times \cdots \times \mathrm{Mod}(Y_k)$ is hyperfinite. The group $\mathrm{Mod}(Y_1)\times \cdots \times \mathrm{Mod}(Y_k)$ is of finite index in $\mathrm{Mod}(Y_1\sqcup \cdots \sqcup Y_k)$. Thus by \cite[Prop~1.3(vii)]{jkl}, the orbit equivalence relation on $\mathcal{EL}(Y_1)\times \cdots \times \mathcal{EL}(Y_k)$ induced by the action of $\mathrm{Mod}(Y_1\sqcup \cdots \sqcup Y_k)$ is hyperfinite. Its pullback~$F$ under~$g$ is thus hyperfinite as well, since $g$ has countable fibers. Since $E$ is contained in $F$, it is hyperfinite by \cite[Prop~1.3(i)]{jkl}, as desired.
\end{proof}
\begin{bibdiv}
\begin{biblist}

\bib{ada}{article}{
   author={Adams, Scot},
   title={Boundary amenability for word hyperbolic groups and an application
   to smooth dynamics of simple groups},
   journal={Topology},
   volume={33},
   date={1994},
   number={4},
   pages={765--783}}

\bib{aeg}{article}{
   author={Adams, Scot},
   author={Elliott, George A.},
   author={Giordano, Thierry},
   title={Amenable actions of groups},
   journal={Trans. Amer. Math. Soc.},
   volume={344},
   date={1994},
   number={2},
   pages={803--822}}

   \bib{AD}{article}{
   author={Anantharaman-Delaroche, Claire},
   title={Syst\`emes dynamiques non commutatifs et moyennabilit\'{e}},
   language={French},
   journal={Math. Ann.},
   volume={279},
   date={1987},
   number={2},
   pages={297--315}}
   
  \bib{AD2}{article}{
   author={Anantharaman-Delaroche, Claire},
   title={Amenability and exactness for dynamical systems and their
   $C^\ast$-algebras},
   journal={Trans. Amer. Math. Soc.},
   volume={354},
   date={2002},
   number={10},
   pages={4153--4178}}
   
   \bib{Renault}{book}{
   author={Anantharaman-Delaroche, Claire},
   author={Renault, Jean},
   title={Amenable groupoids},
   series={Monographies de L'Enseignement Math\'{e}matique [Monographs of
   L'Enseignement Math\'{e}matique]},
   volume={36},
   note={With a foreword by Georges Skandalis and Appendix B by E. Germain},
   publisher={L'Enseignement Math\'{e}matique, Geneva},
   date={2000},
   pages={196}}

\bib{BGH}{article}{
   author={Bestvina, Mladen},
   author={Guirardel, Vincent},
   author={Horbez, Camille},
   title={Boundary amenability of $\mathrm{Out}(F_N)$},
   journal={Ann. Sci. Ec. Norm. Sup\'er.}
   status={accepted}
   date={2021},
   eprint={arXiv:1705.07017}}

\bib{BR}{article}{
   author={Bestvina, Mladen},
   author={Reynolds, Patrick},
   title={The boundary of the complex of free factors},
   journal={Duke Math. J.},
   volume={164},
   date={2015},
   number={11},
   pages={2213--2251}}

\bib{BS}{article}{
   author={Birman, Joan S.},
   author={Series, Caroline},
   title={Geodesics with bounded intersection number on surfaces are
   sparsely distributed},
   journal={Topology},
   volume={24},
   date={1985},
   number={2},
   pages={217--225}}

\bib{cfw}{article}{
   author={Connes, A.},
   author={Feldman, J.},
   author={Weiss, B.},
   title={An amenable equivalence relation is generated by a single
   transformation},
   journal={Ergodic Theory Dynam. Systems},
   volume={1},
   date={1981},
   number={4},
   pages={431--450 (1982)}}

\bib{djk}{article}{
   author={Dougherty, R.},
   author={Jackson, S.},
   author={Kechris, A. S.},
   title={The structure of hyperfinite Borel equivalence relations},
   journal={Trans. Amer. Math. Soc.},
   volume={341},
   date={1994},
   number={1},
   pages={193--225}}

\bib{gao}{book}{
   author={Gao, Su},
   title={Invariant descriptive set theory},
   series={Pure and Applied Mathematics (Boca Raton)},
   volume={293},
   publisher={CRC Press, Boca Raton, FL},
   date={2009},
   pages={xiv+383}}

\bib{Ghys}{collection}{
   title={Sur les groupes hyperboliques d'apr\`es Mikhael Gromov},
   language={French},
   series={Progress in Mathematics},
   volume={83},
   editor={Ghys, \'{E}.},
   editor={de la Harpe, P.},
   note={Papers from the Swiss Seminar on Hyperbolic Groups held in Bern,
   1988},
   publisher={Birkh\"{a}user Boston, Inc., Boston, MA},
   date={1990},
   pages={xii+285}}

\bib{GHL}{article}{
   author={Guirardel, Vincent},
   author={Horbez, Camille},
   author={L\'ecureux, Jean},
   title={Cocycle superrigidity from higher rank lattices to $\mathrm{Out}(F_N)$},
   date={2020},
   eprint={arXiv:2005.07477}}

\bib{H1}{article}{
   author={Hamenst\"{a}dt, Ursula},
   title={Train tracks and the Gromov boundary of the complex of curves},
   conference={
      title={Spaces of Kleinian groups},
   },
   book={
      series={London Math. Soc. Lecture Note Ser.},
      volume={329},
      publisher={Cambridge Univ. Press, Cambridge},
   },
   date={2006},
   pages={187--207}}

\bib{Ham}{article}{
   author={Hamenst\"{a}dt, Ursula},
   title={Geometry of the mapping class groups I: Boundary amenability},
   journal={Invent. Math.},
   volume={175},
   date={2009},
   number={3},
   pages={545--609}}

\bib{H2}{article}{
   author={Hamenst\"{a}dt, Ursula},
   title={The boundary of the free factor graph and the free splitting graph},
   date={2014},
   eprint={arXiv:1211.1630}}

\bib{HPW}{article}{
   author={Hensel, Sebastian},
   author={Przytycki, Piotr},
   author={Webb, Richard C. H.},
   title={1-slim triangles and uniform hyperbolicity for arc graphs and
   curve graphs},
   journal={J. Eur. Math. Soc. (JEMS)},
   volume={17},
   date={2015},
   number={4},
   pages={755--762}}

\bib{hss}{article}{
    AUTHOR = {Huang, Jingyin},
    AUTHOR = {Shinko, Forte},
    AUTHOR = {Sabok, Marcin},
    TITLE =  {Hyperfiniteness of boundary actions of cubulated hyperbolic groups},
    JOURNAL ={Ergodic Theory Dynam.\ Systems},
    date={2019},
    status= {published online}}

\bib{jkl}{article}{
   author={Jackson, S.},
   author={Kechris, A. S.},
   author={Louveau, A.},
   title={Countable Borel equivalence relations},
   journal={J. Math. Log.},
   volume={2},
   date={2002},
   number={1},
   pages={1--80}}

\bib{Kai}{article}{
   author={Kaimanovich, Vadim A.},
   title={Boundary amenability of hyperbolic spaces},
   conference={
      title={Discrete geometric analysis},
   },
   book={
      series={Contemp. Math.},
      volume={347},
      publisher={Amer. Math. Soc., Providence, RI},
   },
   date={2004},
   pages={83--111}}

\bib{Kech1}{article}{
   author={Kechris, Alexander S.},
   title={Amenable versus hyperfinite Borel equivalence relations},
   journal={J. Symbolic Logic},
   volume={58},
   date={1993},
   number={3},
   pages={894--907}}

\bib{Kech}{book}{
   author={Kechris, Alexander S.},
   title={Classical descriptive set theory},
   series={Graduate Texts in Mathematics},
   volume={156},
   publisher={Springer-Verlag, New York},
   date={1995},
   pages={xviii+402}}

\bib{KM}{book}{
   author={Kechris, Alexander S.},
   author={Miller, Benjamin D.},
   title={Topics in orbit equivalence},
   series={Lecture Notes in Mathematics},
   volume={1852},
   publisher={Springer-Verlag, Berlin},
   date={2004},
   pages={x+134}}

\bib{K}{article}{
   author={Kida, Yoshikata},
   title={The mapping class group from the viewpoint of measure equivalence
   theory},
   journal={Mem. Amer. Math. Soc.},
   volume={196},
   date={2008},
   number={916},
   pages={viii+190}}

\bib{Kla}{article}{
   author={Klarreich, Erica},
   title={The boundary at infinity of the curve complex and the Relative Teichm\"uller Space.},
   eprint={arXiv:1803.10339},
   date={1999}}

\bib{L}{article}{
   author={L\'{e}cureux, Jean},
   title={Amenability of actions on the boundary of a building},
   journal={Int. Math. Res. Not. IMRN},
   date={2010},
   number={17},
   pages={3265--3302}}

\bib{M}{article}{
   author={Marquis, Timoth\'{e}e},
   title={On geodesic ray bundles in buildings},
   journal={Geom. Dedicata},
   volume={202},
   date={2019},
   pages={27--43}}

\bib{ms}{article}{
    AUTHOR={Marquis, Timoth\'ee},
    author={Sabok, Marcin},
    TITLE={Hyperfiniteness of boundary actions of hyperbolic groups},
    JOURNAL={Math. Ann.},
    volume={377},     
    date={2020},
    pages={1129–-1153}}

\bib{MM}{article}{
   author={Masur, Howard A.},
   author={Minsky, Yair N.},
   title={Geometry of the complex of curves~I: Hyperbolicity},
   journal={Invent. Math.},
   volume={138},
   date={1999},
   number={1},
   pages={103--149}}

\bib{MS}{article}{
   author={Masur, Howard A.},
   author={Schleimer, Saul},
   title={The geometry of the disk complex},
   journal={J. Amer. Math. Soc.},
   volume={26},
   date={2013},
   number={1},
   pages={1--62}}

\bib{Moore}{article}{
   author={Moore, Justin Tatch},
   title={A brief introduction to amenable equivalence relations},
   conference={
      title={Trends in set theory},
   },
   book={
      series={Contemp. Math.},
      volume={752},
      publisher={Amer. Math. Soc., Providence, RI},
   },
   date={2020},
   pages={153--163}}

\bib{NS}{article}{
   author={Nevo, Amos},
   author={Sageev, Michah},
   title={The Poisson boundary of ${\rm CAT}(0)$ cube complex groups},
   journal={Groups Geom. Dyn.},
   volume={7},
   date={2013},
   number={3},
   pages={653--695}}

\bib{O}{article}{
   author={Ozawa, Narutaka},
   title={Boundary amenability of relatively hyperbolic groups},
   journal={Topology Appl.},
   volume={153},
   date={2006},
   number={14},
   pages={2624--2630}}
   
\bib{Oza}{article}{
   author={Ozawa, Narutaka},
   title={Amenable actions and applications},
   conference={
      title={International Congress of Mathematicians. Vol. II},
   },
   book={
      publisher={Eur. Math. Soc., Z\"{u}rich},
   },
   date={2006},
   pages={1563--1580}}

\bib{Pho}{article}{
   author={Pho-On, Witsarut},
   title={Infinite unicorn paths and Gromov boundaries},
   journal={Groups Geom. Dyn.},
   volume={11},
   date={2017},
   number={1},
   pages={353--370}}

\bib{Sch}{article}{
   author={Schleimer, Saul},
   status={unpublished manuscript},
   date={2013}}

\bib{T}{article}{
   author={Touikan, Nicholas},
   title={On geodesic ray bundles in hyperbolic groups},
   journal={Proc. Amer. Math. Soc.},
   volume={146},
   date={2018},
   number={10},
   pages={4165--4173}}

\bib{ver}{article}{
   author={Ver\v{s}ik, A. M.},
   title={The action of ${\rm PSL}(2, {\bf Z})$ in ${\bf R}^{1}$ is
   approximable},
   language={Russian},
   journal={Uspehi Mat. Nauk},
   volume={33},
   date={1978},
   number={1(199)},
   pages={209--210}}
   
\bib{Z}{article}{
   author={Zimmer, Robert J.},
   title={Amenable ergodic group actions and an application to Poisson
   boundaries of random walks},
   journal={J. Functional Analysis},
   volume={27},
   date={1978},
   number={3},
   pages={350--372}}   

\bib{zim}{book}{
   author={Zimmer, Robert J.},
   title={Ergodic theory and semisimple groups},
   series={Monographs in Mathematics},
   volume={81},
   publisher={Birkh\"{a}user Verlag, Basel},
   date={1984},
   pages={x+209}}

\end{biblist}
\end{bibdiv}

\end{document}